\providecommand{\U}[1]{\protect\rule{.1in}{.1in}}
\newtheorem{theorem}{Theorem}[section]
\newtheorem{lemma}{Lemma}[section]
\newtheorem{corollary}{Corollary}[section]
\theoremstyle{definition}
\theoremstyle{remark}
\newtheorem{example}{Example}[section]
\DeclareMathOperator*{\esssup}{ess\,sup}
\numberwithin{equation}{section}
\begin{document}
	\begin{frontmatter}
		
		\title{Delayed  Gronwall inequality with weakly the singular kernel}
		
		

		\author[]{Javad A. Asadzade}
		\ead{javad.asadzade@emu.edu.tr}
		\author[]{Jasarat J. Gasimov}
		\ead{jasarat.gasimov@emu.edu.tr}
		\author[]{Nazim I. Mahmudov}
		\ead{nazim.mahmudov@emu.edu.tr}
		\cortext[cor1]{Corresponding author}

		\address{Department of Mathematics, Eastern Mediterranean University, Mersin 10, 99628, T.R. North Cyprus, Turkey}
		\address{	Research Center of Econophysics, Azerbaijan State University of Economics (UNEC), Istiqlaliyyat Str. 6, Baku 1001, Azerbaijan}
		

		\begin{abstract}
			
		Delay Gronwall inequality with a weakly singular kernel has been a subject of interest in various mathematical studies. In this article, we will delve into the consideration of this inequality and its application in the study continuity of the state trajectory for a  Volterra integral equation with delay. Using delay Gronwall inequality with a singular kernel, we investigate the behavior and properties of the state trajectory if there are delays. This analysis aims to improve our understanding of the dynamics associated in del Volterra integral equations with delay. In addition, we present a comprehensive example demonstrating the practical significance of our results.
			\noindent
		\end{abstract}
		\begin{keyword}
		Gronwall inequality, well-posedness, singular kernel
		\end{keyword}
	\end{frontmatter}

	\section{INTRODUCTION}
	\label{Sec:intro}
	The delay Gronwall inequality with a singular kernel is a fundamental result in the field of mathematical analysis and is widely used in various areas of mathematics, physics, and engineering. The inequality provides a powerful tool for estimating the growth and behavior of functions and has applications in diverse fields such as differential equations,delay systems, mathematical modeling, control theory, and stochastic processes.
	
	The Gronwall inequality, named after American mathematician Thomas H. Gronwall, establishes an upper bound for the growth of a function based on its integral and the behavior of a singular kernel. The inequality has proven to be an indispensable tool for studying the properties of solutions to differential equations and integral equations.
	
	The delay Gronwall inequality with a singular kernel offers a rigorous mathematical framework to analyze the growth and stability of functions subject to delays and singular influences. By considering the integral of a function multiplied by the singular kernel, the inequality provides an upper bound on the growth rate of delayed functions, enabling researchers to derive essential bounds and estimates. This inequality is indispensable for studying the stability and convergence properties of delayed differential equations, integral equations, and time-delay systems.

	Furthermore, the delay Gronwall inequality with a singular kernel has profound implications in various scientific fields. In control theory, it plays a vital role in analyzing the stability and performance of control systems subjected to delays, ensuring robustness and preventing undesired behaviors. In signal processing and communication systems, it assists in understanding the impact of time delays on signal transmission and reception, contributing to the design of efficient and reliable communication networks. Its versatility and broad applicability make the delay Gronwall inequality with a singular kernel a valuable asset in both theoretical and applied mathematics.
	
	In this paper, we aim to provide a comprehensive overview of the delay Gronwall inequality with a singular kernel. We will explore its mathematical formulation, discuss its properties, and examine its applications in different areas of mathematics and science. By understanding the intricacies of this inequality, researchers and practitioners can employ it effectively to tackle various problems and gain insights into the behavior of complex systems.
	
	In recent research \cite{1}-\cite{15}, different generalizations of the Gronwall inequalities have been developed. In particular, Ping Lin and Jiongmin Yong \cite{1} derived a notable inequality under certain conditions. 
	
	We will denote the following necessary theorem, which we use it for proof of the delayed Gronwall's inequality for singular integral.
	\begin{lemma}(\cite{1})\label{lemma1}
		Let $\beta \in (0, 1)$ and $q > 1/\beta$. Let $L(\cdot)$, $a(\cdot)$, $y(\cdot)$ be nonnegative functions with $L(\cdot) \in L^q(0, T)$ and $a(\cdot)$, $y(\cdot) \in L^{\frac{q}{q-1}}(0, T)$. Suppose 
		\begin{align}\label{as}
			y(t) \leq a(t) + \int_{0}^{t} \frac{L(s)y(s)}{(t - s)^{1-\beta}} ds, \quad a.e. \quad t \in [0, T].
		\end{align}
		Then there exists a constant $K>0$ such that
		\begin{align*}
			y(t) \leq a(t) + K\int_{0}^{t} \frac{L(s)a(s)}{(t - s)^{1-\beta}} ds, \quad a.e. \quad t \in [0, T].
		\end{align*}
		
	\end{lemma}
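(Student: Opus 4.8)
The plan is to read \eqref{as} as an estimate of the form $y\le a+\mathcal{T}y$ for the positive linear operator
\[
(\mathcal{T}\phi)(t):=\int_0^t (t-s)^{\beta-1}L(s)\phi(s)\,ds,
\]
and to iterate it. First I would note that $\mathcal{T}$ is order-preserving on nonnegative functions and that, since $q>1/\beta$ forces $(\beta-1)\tfrac{q}{q-1}>-1$, Hölder's inequality (with $L\in L^q$, $\phi\in L^{q/(q-1)}$, and $(t-\cdot)^{\beta-1}\in L^{q/(q-1)}_{\mathrm{loc}}$) makes $(\mathcal{T}\phi)(t)$ finite for a.e.\ $t$. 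Monotonicity then lets \eqref{as} be iterated: for every $n\ge 1$,
\[
y(t)\le\sum_{k=0}^{n-1}(\mathcal{T}^k a)(t)+(\mathcal{T}^n y)(t),\qquad\text{a.e. }t\in[0,T].
\]

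The key step is a kernel bound for the powers of $\mathcal{T}$. I would show by induction on $k$ that, for every nonnegative $\phi$,
\[
(\mathcal{T}^k\phi)(t)\le C_k\int_0^t (t-s)^{\gamma_k-1}L(s)\phi(s)\,ds,\qquad \gamma_k:=\beta+(k-1)\bigl(\beta-\tfrac1q\bigr),\ \ C_1=1,
\]
where the inductive step swaps the order of integration and combines Hölder's inequality in the inner variable with the Beta identity $\int_s^t (t-r)^{\mu-1}(r-s)^{\nu-1}\,dr=B(\mu,\nu)(t-s)^{\mu+\nu-1}$. This produces the recursions $\gamma_{k+1}=\gamma_k+\beta-\tfrac1q$ and $C_{k+1}=C_k\,\|L\|_{L^q}\,B\bigl((\beta-1)q'+1,(\gamma_k-1)q'+1\bigr)^{1/q'}$ with $q'=\tfrac{q}{q-1}$; here $q>1/\beta$ is used once more to keep the arguments of the Beta function positive at every stage (note $\gamma_1=\beta>1/q$ and $\gamma_k$ is increasing because $\beta-\tfrac1q>0$).

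Since $\gamma_k\to\infty$, the kernels $(t-s)^{\gamma_k-1}$ are eventually bounded on $[0,T]^2$, and comparing $(t-s)^{\gamma_k-1}$ with $(t-s)^{\beta-1}$ gives $(\mathcal{T}^k a)(t)\le C_k\max(1,T^{\gamma_k-\beta})(\mathcal{T}a)(t)$ for all $k\ge 1$; applying the same bound to $\mathcal{T}^n y$ and using Hölder's inequality once more yields $(\mathcal{T}^n y)(t)\le C_n\cdot\mathrm{const}(T,\|L\|_{L^q},\|y\|_{L^{q/(q-1)}})$. Everything then reduces to proving $\sum_{k\ge1}C_k\max(1,T^{\gamma_k-\beta})<\infty$; granting this, in particular $C_n\to0$, so $\mathcal{T}^n y\to 0$, and letting $n\to\infty$ in the iterated inequality gives
\[
y(t)\le a(t)+\Bigl(\sum_{k\ge1}C_k\max(1,T^{\gamma_k-\beta})\Bigr)(\mathcal{T}a)(t),
\]
which is the assertion with $K:=\sum_{k\ge1}C_k\max(1,T^{\gamma_k-\beta})$.

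The main obstacle is exactly this convergence. The $C_k$ are products of the Beta factors above, and since $B\bigl((\beta-1)q'+1,(\gamma_k-1)q'+1\bigr)^{1/q'}$ behaves like a fixed negative power of $\gamma_k\sim k(\beta-\tfrac1q)$ as $k\to\infty$ (using $B(x,y)\sim\Gamma(x)y^{-x}$ for $y\to\infty$), the product $C_k$ decays at a super-exponential rate of order $1/(k!)^{\delta}$ for some $\delta>0$. By Stirling's formula this beats the merely exponential growth of $T^{\gamma_k-\beta}=T^{(k-1)(\beta-1/q)}$, so the series converges (with separate, harmless bookkeeping according to whether $T\ge1$ or $T<1$). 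Turning this asymptotic heuristic into uniform estimates for the Beta factors is the technical crux; the rest is routine manipulation with Hölder's inequality and the semigroup property of the Riemann--Liouville kernel.
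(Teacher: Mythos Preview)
Your proposal is correct and follows the standard iteration argument for singular Gronwall inequalities. Note, however, that the paper does \emph{not} give its own proof of this lemma: it is quoted from reference~\cite{1} and used as a black box in the proof of Theorem~\ref{t12}. There is therefore no ``paper's proof'' to compare against directly.

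That said, the computation you propose for the inductive step---Fubini followed by H\"older in the inner variable and the Beta identity---is precisely the manipulation the paper carries out once in Step~2 of Theorem~\ref{t12} (see the passage leading to the definition of $K_1$ and $\nu_1=\nu+(\nu-\tfrac1q)$, which is your $\gamma_2$ with $\beta=\nu$). So your argument is the natural full iteration of the single-step estimate the paper exhibits, and it matches the approach in~\cite{1}. Your convergence analysis for $\sum_k C_k\max(1,T^{\gamma_k-\beta})$ via the asymptotics $B(x,y)\sim\Gamma(x)y^{-x}$ and Stirling is the right mechanism; the factorial-type decay with exponent $\beta-\tfrac1q>0$ indeed dominates the geometric growth $T^{(k-1)(\beta-1/q)}$, so the series converges and $K$ is finite.
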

	
	Another interesting scenario, if we replace  assumption \eqref{as}  with the following condition:
	
	\begin{align}\label{as1}
		\xi(t)\leq \vartheta(t)+\int_{0}^{t}\frac{L(s)\xi(s)}{(t-s)^{1-\nu}}ds+\int_{0}^{t}\frac{L(s)\xi(s-h)}{(t-s)^{1-\nu}}ds, \quad \text{a.e.,} \quad t\in [0,T].
	\end{align}
	
	Then, there exists a constant $K>0$ such that
	
	\begin{align}
		\xi(t)\leq&\vartheta_{n}(t)+K\int_{0}^{t}\frac{L(S)\vartheta(s)}{(t-s)^{1-\nu}}ds, \quad t\in [0,T].
	\end{align}
	Where 
	\begin{align*}
		\vartheta_{n}(t)=&\vartheta(t)+K\sum_{k=1}^{n}\int_{0}^{t-kh}\frac{L(s)\vartheta(s)}{(t-kh-s)^{1-\nu}}ds+K\sum_{k=0}^{n-1}\int_{h}^{t-kh}\frac{L(s)\vartheta(s-h)}{(t-kh-s)^{1-\nu}}ds.
	\end{align*}
	The assumption \eqref{as1} is considered natural and plays a crucial role in investigating the well-posedness problem discussed in Section 4. These results highlight the significant advancements in understanding the properties and generalizations of Gronwall inequalities, paving the way for further exploration and applications in various mathematical analyses.
	\section{Preliminaries} In the forthcoming section, we will provide initial findings that will prove beneficial in subsequent analysis. To begin, consider a predetermined time horizon denoted as $T>0$. We will now define the subsequent spaces:
	\begin{align*}
		L^{p}(0,T;R^{n})=\bigg\lbrace \phi :[0,T]\to R^{n} \quad | \quad \phi(\cdot) \quad is \quad measurable, \\
		\Vert \phi(\cdot)\Vert_{p}\equiv \bigg(\int_{0}^{T} \vert \phi(t)\vert^{p}dt\bigg)^{1/p}<\infty\bigg\rbrace, 1\leq p<\infty,
	\end{align*}
	\begin{align*}
		L^{\infty}(0,T;R^{n})=\bigg\lbrace \phi :[0,T]\to R^{n} \quad | \quad \phi(\cdot) \quad is \quad measurable, \quad \Vert \phi(\cdot)\Vert_{\infty}\equiv \esssup_{t\in[0,T]}\vert \phi(t)\vert<\infty\bigg\rbrace.
	\end{align*}
	Also, we define
	\begin{align*}
		L^{p+}(0,T;R^{n})=\bigcup_{r>p} L^{r}(0,T;R^{n}), \quad  1\leq p<\infty,\\
		L^{p-}(0,T;R^{n})=\bigcap_{r<p} L^{r}(0,T;R^{n}), \quad  1<p<\infty.
	\end{align*}
	In the subsequent analysis, we utilize the notation $\Delta=\lbrace (t,s)\in [0,T]^{2} | 0\leq s<t\leq T\rbrace$. It is important to note that the "diagonal line" represented by $\lbrace (t,t)| t\in [0,T]\rbrace$ does not belong to $\Delta$. Consequently, if we consider a continuous mapping $\phi :\Delta\to R^{n}$ where $(t,s)\mapsto\phi(t,s)$, the function $\phi(\cdot,\cdot)$ may become unbounded as the difference $\vert t-s\vert \to 0$.
	
	In the article, we adopt the notation $t_{1} \vee t_{2}=\max \lbrace t_{1},t_{2}\rbrace$ and $t_{1} \wedge t_{2}=\min \lbrace t_{1},t_{2}\rbrace$, for any $t_{1},t_{2}\in R$. Notably, $t^{+}=t\vee 0$.

	\begin{lemma}\label{l1}(Lemma 2.1, page 138, \cite{1})
		Let $p,q,r\geq 1$ satisfy $\frac{1}{p}+1=\frac{1}{q}+\frac{1}{r}$. Then for any $ f(\cdot)\in L^{q}(R^{n}),g(\cdot)\in L^{r}(R^{n})$,
		\begin{align}
			\Vert f(\cdot)*g(\cdot)\Vert_{L^{p}(R^{n})}\leq \Vert f(\cdot)\Vert_{L^{q}(R^{n})}\Vert g(\cdot)\Vert_{L^{r}(R^{n})}.
		\end{align}
	\end{lemma}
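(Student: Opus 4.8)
\emph{Approach.} The statement is Young's inequality for convolutions; the plan is a self-contained proof using only Hölder's inequality in its three-exponent form and Tonelli's theorem (the faster interpolation route is noted at the end). Replacing $f,g$ by $|f|,|g|$, we may assume $f,g\ge 0$. The hypothesis $\frac1p+1=\frac1q+\frac1r$ forces $q\le p$ and $r\le p$, since $\frac1q=\frac1p+\big(1-\frac1r\big)\ge\frac1p$; hence $p=\infty$ occurs exactly when $q,r$ are conjugate, and there the estimate is immediate from Hölder applied to $y\mapsto f(x-y)g(y)$ followed by $\sup_x$. It therefore suffices to treat $1\le q,r\le p<\infty$. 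Define $s,t\in[1,\infty]$ by $\frac1s=\frac1q-\frac1p$ and $\frac1t=\frac1r-\frac1p$, i.e. $s=\frac{pq}{p-q}$, $t=\frac{pr}{p-r}$ (with $s=\infty$ when $q=p$, which forces $r=1$, and symmetrically for $t$).

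\emph{Pointwise split and Hölder.} The first key step is the factorization, valid for a.e. $x$ and every $y$,
\[
f(x-y)\,g(y)=\big(f(x-y)^q g(y)^r\big)^{1/p}\cdot f(x-y)^{\,1-q/p}\cdot g(y)^{\,1-r/p},
\]
together with the identity $\frac1p+\frac1s+\frac1t=\frac1q+\frac1r-\frac1p=1$ read off from the hypothesis. Applying the three-factor Hölder inequality in the variable $y$ with exponents $p,s,t$ (a factor carrying exponent $0$ being interpreted as the constant $1$ paired with $L^\infty$) gives
\[
(f*g)(x)\le\Big(\int f(x-y)^q g(y)^r\,dy\Big)^{1/p}\Big(\int f(x-y)^q\,dy\Big)^{1/s}\Big(\int g(y)^r\,dy\Big)^{1/t},
\]
and the last two integrals equal $\|f\|_q^{q}$ and $\|g\|_r^{r}$ by translation invariance of Lebesgue measure.

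\emph{Integrate and collect exponents.} Raising this bound to the power $p$, integrating in $x\in\mathbb{R}^n$, and interchanging the order of integration in the resulting double integral by Tonelli (using $\int f(x-y)^q\,dx=\|f\|_q^{q}$ for each fixed $y$) produces
\[
\|f*g\|_p^{p}\le \|f\|_q^{pq/s}\,\|g\|_r^{pr/t}\,\|f\|_q^{q}\,\|g\|_r^{r}.
\]
Taking $p$-th roots, the exponent of $\|f\|_q$ is $\frac qp+\frac qs=\frac qp+\big(1-\frac qp\big)=1$, and likewise that of $\|g\|_r$ is $1$, which is exactly $\|f*g\|_p\le\|f\|_q\|g\|_r$.

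\emph{Main obstacle.} The only delicate point is the exponent bookkeeping: one must choose $s,t$ so that $\frac1p+\frac1s+\frac1t=1$ \emph{and} the residual powers of $f$ and $g$ recombine into precisely $\|f\|_q$ and $\|g\|_r$, and one must verify the degenerate cases $q=p$ ($\Rightarrow r=1$) and $r=p$ ($\Rightarrow q=1$), where a Hölder exponent is $\infty$ — these are covered by the stated convention (the corresponding factor being $1$) or, if preferred, directly by Minkowski's integral inequality. A shorter but heavier alternative bypasses the computation entirely via Riesz–Thorin interpolation: for fixed $g\in L^r$ the map $f\mapsto f*g$ sends $L^1\to L^r$ and $L^{r'}\to L^\infty$, each with norm $\le\|g\|_r$, and interpolating with the parameter dictated by the hypothesis yields boundedness $L^q\to L^p$ with the same bound. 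I would keep the Hölder computation as the primary proof.
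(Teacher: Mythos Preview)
Your proof is correct and is the standard three-exponent H\"older argument for Young's convolution inequality; the exponent bookkeeping and the handling of the degenerate endpoints are all in order. Note, however, that the paper does not supply its own proof of this lemma: it is merely quoted from \cite{1} (Lemma~2.1 there), so there is no in-paper argument to compare against. Your write-up thus serves as a self-contained justification that the paper omits; if anything, you might shorten the discussion of the interpolation alternative, since the direct H\"older route already suffices.
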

	\begin{corollary}\label{c1}(Corollary 2.2, page 138, \cite{1})
		Let $\beta\in (0,1)$, $1\leq r<\frac{1}{1-\beta}$, and $\frac{1}{p}+1=\frac{1}{q}+\frac{1}{r}, \quad p,q\geq1$. Then for any $a<b, \quad 0<\delta\leq b-a$, and $\varphi(\cdot)\in L^{q}(a,b)$,
		\begin{align}
			\bigg(\int_{a}^{a+\delta}\bigg\vert \int_{a}^{t}\frac{\varphi(s)ds}{(t-s)^{1-\beta}}\bigg\vert^{p}dt\bigg)^{1/p}\leq \bigg(\frac{\delta^{1-r(1-\beta)}}{1-r(1-\beta)}\bigg)^{1/r}\Vert \varphi(\cdot)\Vert_{L^{q}(a,b)}.
		\end{align}
	\end{corollary}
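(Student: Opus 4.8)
The plan is to recognize the weakly singular integral operator as a convolution and then apply Young's convolution inequality, Lemma~\ref{l1}. By translation invariance one may assume $a=0$, although I will keep $a$ general below so the bookkeeping matches the stated interval $(a,a+\delta)$.

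First I would introduce the \emph{truncated} singular kernel $k(u)=u^{\beta-1}\chi_{(0,\delta)}(u)$ (so $k$ vanishes off $(0,\delta)$, in particular for $u\le 0$) and the zero extension $\widetilde\varphi$ of $\varphi$ from $(a,b)$ to all of $\mathbb{R}$. The crucial observation is that $k\ast\widetilde\varphi$ reproduces the original integral on the interval of interest: for $t\in(a,a+\delta)$ we have $t-\delta<a$ and $t<b$ (the latter since $\delta\le b-a$), so the support constraint $t-s\in(0,\delta)$ coming from $k$ and the constraint $s\in(a,b)$ coming from $\widetilde\varphi$ together force $s\in(a,t)$, and hence $(k\ast\widetilde\varphi)(t)=\int_a^t\varphi(s)(t-s)^{\beta-1}\,ds$. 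Consequently the left-hand side of the corollary equals $\|k\ast\widetilde\varphi\|_{L^p(a,a+\delta)}\le\|k\ast\widetilde\varphi\|_{L^p(\mathbb{R})}$.

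Next I would check that the hypotheses of Lemma~\ref{l1} hold and compute the two factors. The exponent relation $\tfrac1p+1=\tfrac1q+\tfrac1r$ with $p,q,r\ge 1$ is assumed; $\widetilde\varphi\in L^q(\mathbb{R})$ with $\|\widetilde\varphi\|_{L^q(\mathbb{R})}=\|\varphi\|_{L^q(a,b)}$; and $k\in L^r(\mathbb{R})$ precisely because $r(1-\beta)<1$, which yields
\[
\|k\|_{L^r(\mathbb{R})}=\Bigl(\int_0^{\delta}u^{-r(1-\beta)}\,du\Bigr)^{1/r}=\Bigl(\tfrac{\delta^{1-r(1-\beta)}}{1-r(1-\beta)}\Bigr)^{1/r}.
\]
Applying Lemma~\ref{l1} with $f=k$ and $g=\widetilde\varphi$ gives $\|k\ast\widetilde\varphi\|_{L^p(\mathbb{R})}\le\|k\|_{L^r(\mathbb{R})}\|\widetilde\varphi\|_{L^q(\mathbb{R})}$, and chaining this with the bound from the previous paragraph produces exactly the asserted inequality.

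The argument is short, and the only step demanding genuine care is the support bookkeeping: verifying that truncating the kernel at level $\delta$ and extending $\varphi$ by zero does not change the value of the integral for $t\in(a,a+\delta)$, so that passing to a full convolution on $\mathbb{R}$ is legitimate. Once that identification is in place, the estimate is a one-line consequence of Young's inequality, so I do not anticipate any serious obstacle — the corollary is essentially an interval-localized, sharp-constant repackaging of Lemma~\ref{l1}.
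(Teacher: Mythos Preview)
Your proof is correct and follows exactly the route the paper intends: the corollary is quoted from \cite{1} without proof, but its placement immediately after Lemma~\ref{l1} (Young's inequality) signals that it is a direct application of that lemma, and your zero-extension/truncated-kernel identification followed by Young is precisely that application. The only cosmetic point is that in Lemma~\ref{l1} the factor in $L^q$ is called $f$ and the one in $L^r$ is called $g$, so strictly you should take $f=\widetilde\varphi$ and $g=k$; by commutativity of convolution this is immaterial.
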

	\section{Main result}  In this section, we will present the following theorem that demonstrates the delay Gronwall inequality with a singular kernel.
	\begin{theorem}\label{t12}
		Let $\nu\in(0,1)$, $h>0 $, and $q>\frac{1}{\nu}$. Suppose that $L(\cdot), \vartheta(\cdot),\xi(\cdot)$ be nonnegative functions with $\vartheta(t)=0$,  $L(t)=0 \quad t<0$, where $L(\cdot)\in L^{q}(0,T)$ and $\vartheta(\cdot),\xi(\cdot)\in L^{\frac{q}{q-1}}[-h,T)$. Assume
		\begin{equation}\label{eq1}
			\begin{cases}
				\xi(t)\leq \vartheta(t)+\int_{0}^{t}\frac{L(s)\xi(s)}{(t-s)^{1-\nu}}ds+\int_{0}^{t}\frac{L(s)\xi(s-h)}{(t-s)^{1-\nu}}ds, & a.e. \quad t\in[0,T],\\
				\xi(t)=0, & t<0.
			\end{cases}
		\end{equation}
		Then there is a constant $K>0$ such that
		\begin{align}
			\xi(t)\leq&\vartheta_{n}(t)+K\int_{0}^{t}\frac{L(s)\vartheta(s)}{(t-s)^{1-\nu}}ds,\quad a.e. \quad t\in [0,T],
		\end{align}
		where
		\begin{align}
			\vartheta_{n}(t)=&\vartheta(t)+K\sum_{k=1}^{n}\int_{0}^{t-kh}\frac{L(s)\vartheta(s)}{(t-kh-s)^{1-\nu}}ds+K\sum_{k=0}^{n-1}\int_{h}^{t-kh}\frac{L(s)\vartheta(s-h)}{(t-kh-s)^{1-\nu}}ds.
		\end{align}
	\end{theorem}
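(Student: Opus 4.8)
I would argue by first eliminating the nonlocal self-reference $\int_{0}^{t}\frac{L(s)\xi(s)}{(t-s)^{1-\nu}}ds$ with Lemma~\ref{lemma1}, and then peeling off the delayed term $\int_{0}^{t}\frac{L(s)\xi(s-h)}{(t-s)^{1-\nu}}ds$ one shift $h$ at a time. Throughout set $(\mathcal{K}f)(t):=\int_{0}^{t}\frac{L(s)f(s)}{(t-s)^{1-\nu}}ds$, keep $\xi$ extended by $0$ to $(-\infty,0)$, and write $q':=q/(q-1)$; in this notation \eqref{eq1} reads $\xi(t)\le\vartheta(t)+(\mathcal{K}\xi)(t)+\big(\mathcal{K}(\xi(\cdot-h))\big)(t)$ a.e.\ on $[0,T]$. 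Since $\xi$ vanishes on $(-\infty,0)$, after finitely many peelings the surviving delayed contribution is identically $0$ on $[0,T]$, and assembling the bounds produces exactly $\vartheta_{n}$.

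\smallskip
\noindent\emph{Step 1 (self-reference).} I would regard $a(t):=\vartheta(t)+\big(\mathcal{K}(\xi(\cdot-h))\big)(t)$ as the free term in \eqref{eq1}. H\"older gives $L\,\xi(\cdot-h)\in L^{1}(0,T)$, and because $q>1/\nu$ one has $q'<1/(1-\nu)$, so Corollary~\ref{c1} forces $\mathcal{K}(\xi(\cdot-h))\in L^{q'}(0,T)$; hence $a\in L^{q'}(0,T)$ is nonnegative and Lemma~\ref{lemma1} applies, producing $K_{0}>0$ with $\xi\le\vartheta+K_{0}\mathcal{K}\vartheta+\mathcal{K}(\xi(\cdot-h))+K_{0}\mathcal{K}\mathcal{K}(\xi(\cdot-h))$ a.e.\ on $[0,T]$. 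The iterated kernel collapses: by Tonelli, the identity $\int_{r}^{t}\big[(t-s)(s-r)\big]^{-q'(1-\nu)}ds=B\big(1-q'(1-\nu),1-q'(1-\nu)\big)(t-r)^{1-2q'(1-\nu)}$ (finite precisely because $q'(1-\nu)<1\Leftrightarrow q>1/\nu$) and $t-r\le T$, there is $\kappa>0$ with $(\mathcal{K}\mathcal{K}g)(t)\le\kappa(\mathcal{K}g)(t)$ on $[0,T]$ for every nonnegative $g$ --- the estimate already present in the proof of Lemma~\ref{lemma1} in \cite{1}. Hence, with $C_{1}:=1+K_{0}\kappa$,
\begin{align}\label{step1}
\xi(t)\ \le\ \vartheta(t)+K_{0}(\mathcal{K}\vartheta)(t)+C_{1}\big(\mathcal{K}(\xi(\cdot-h))\big)(t),\qquad \text{a.e.}\ t\in[0,T].
\end{align}

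\smallskip
\noindent\emph{Step 2 (peeling the delay).} As $\xi(\cdot-h)$ vanishes on $[0,h)$, one has $\big(\mathcal{K}(\xi(\cdot-h))\big)(t)=\int_{h}^{t}\frac{L(s)\xi(s-h)}{(t-s)^{1-\nu}}ds$, and I would substitute \eqref{step1} evaluated at $s-h$ into this integral (legitimate, as $s-h\in[0,T]$ for $s\in[h,t]\subset[0,T]$). This splits it into three pieces: the piece with $\vartheta(s-h)$ is already a term $\int_{h}^{t}\frac{L(s)\vartheta(s-h)}{(t-s)^{1-\nu}}ds$ of the second sum of $\vartheta_{n}$; the piece with $K_{0}(\mathcal{K}\vartheta)(s-h)$, after Tonelli and the same beta-integral estimate, is bounded by a constant (depending only on $\|L\|_{q},T,\nu,q$) times $\int_{0}^{t-h}\frac{L(r)\vartheta(r)}{(t-h-r)^{1-\nu}}dr$, a term of the first sum of $\vartheta_{n}$; and the piece with $C_{1}\big(\mathcal{K}(\xi(\cdot-h))\big)(s-h)$, by the same manipulation, is bounded by a constant times $\big(\mathcal{K}(\xi(\cdot-h))\big)(t-h)$ --- the same delayed quantity with time argument pushed back by $h$. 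Iterating, an induction on $m=0,1,2,\dots$ would give, for suitable constants $D_{m}>0$ (integrals with non-positive upper limit read as $0$),
\begin{align*}
\xi(t)\ \le\ &\vartheta(t)+K(\mathcal{K}\vartheta)(t)+K\sum_{k=1}^{m}\int_{0}^{t-kh}\frac{L(s)\vartheta(s)}{(t-kh-s)^{1-\nu}}ds\\
&+K\sum_{k=0}^{m-1}\int_{h}^{t-kh}\frac{L(s)\vartheta(s-h)}{(t-kh-s)^{1-\nu}}ds+D_{m}\big(\mathcal{K}(\xi(\cdot-h))\big)(t-mh).
\end{align*}
Since $\big(\mathcal{K}(\xi(\cdot-h))\big)(u)=\int_{h}^{u}\frac{L(s)\xi(s-h)}{(u-s)^{1-\nu}}ds=0$ for $u\le h$, the residual vanishes a.e.\ on $[0,T]$ once $(m+1)h\ge T$; fixing such $m=n$ and letting $K$ be the maximum of the finitely many constants produced (legitimate because every term on the right is nonnegative) turns the last display into $\xi(t)\le\vartheta_{n}(t)+K(\mathcal{K}\vartheta)(t)$, which is the assertion.

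\smallskip
\noindent\emph{Main obstacle.} The analytic content is light --- one beta-function computation plus Lemma~\ref{lemma1} --- so the real difficulty is the bookkeeping in Step~2: verifying that each substitution regenerates \emph{precisely} a $\vartheta$-term with the correct limit $t-kh$ and kernel $(t-kh-s)^{-(1-\nu)}$ and \emph{precisely} a $\vartheta(\cdot-h)$-term (in particular that the shift created by Tonelli ends up absorbed into the kernel exponent and a power of $T$ rather than inside $L$), and keeping the constant accounting honest even though $D_{m}$ may grow geometrically in $m$ --- harmless, since only $n$ steps are needed. The only other point, routine, is the $L^{q'}(0,T)$-membership used to invoke Lemma~\ref{lemma1} in Step~1, which rests on $q>1/\nu$ through Corollary~\ref{c1}.
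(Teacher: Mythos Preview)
Your argument is correct, but it is organised differently from the paper's. The paper partitions $[0,T]$ into the subintervals $[0,h],(h,2h],\dots,(nh,T]$ and inducts on the interval index: on each new interval the delayed value $\xi(s-h)$ falls into an interval already treated, so the $\vartheta$-only bound obtained there is substituted, the resulting double singular integral is collapsed via Fubini, H\"older and the Beta identity (exactly your computation), and only then is Lemma~\ref{lemma1} applied---afresh on each interval---to remove $\int_{0}^{t}\frac{L(s)\xi(s)}{(t-s)^{1-\nu}}ds$. Thus Lemma~\ref{lemma1} is invoked $n{+}1$ times. You instead invoke Lemma~\ref{lemma1} once, globally on $[0,T]$, by absorbing the delayed integral $\big(\mathcal{K}(\xi(\cdot-h))\big)(t)$ into the free term (the $L^{q'}$ check via Corollary~\ref{c1} is the right way to justify this), and then run the induction purely on the delay depth, pushing the residual $\big(\mathcal{K}(\xi(\cdot-h))\big)(t-mh)$ down by $h$ at each step until it vanishes. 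Your decomposition cleanly separates the two mechanisms (self-reference versus delay) and needs only the single collapse estimate $(\mathcal{K}\mathcal{K}g)(t)\le\kappa(\mathcal{K}g)(t)$; the paper's interval-by-interval scheme has the complementary advantage that after each substitution the delayed term is already $\xi$-free, so no residual has to be carried. The analytic core---the bound $\int_{r+h}^{t}\frac{L(s)\,ds}{(t-s)^{1-\nu}(s-h-r)^{1-\nu}}\le C\,(t-h-r)^{-(1-\nu)}$ obtained from H\"older and the Beta integral, valid precisely because $q>1/\nu$---is identical in both approaches.
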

	\begin{proof}
		Let's contemplate the partition of $[0,T]$ by intervals of length $h$, where there exists an integer $n$ such that $0 < h < 2h < \dots < nh \leq T < (n+1)h$.
		
		$\bullet$ \textbf{Step 1.} Consider \eqref{eq1}  on the $0<t\leq h$,
		\begin{align*}
			\xi(t)\leq \vartheta(t)+\int_{0}^{t}\frac{L(s)\xi(s)}{(t-s)^{1-\nu}}ds+\int_{0}^{t}\frac{L(s)\xi(s-h)}{(t-s)^{1-\nu}}ds.
		\end{align*}
		Since $\xi(t)=0, \quad -h\leq t\leq 0$, it follows 
		\begin{align*}
			\xi(t)\leq \vartheta(t)+\int_{0}^{t}\frac{L(s)\xi(s)}{(t-s)^{1-\nu}}ds.
		\end{align*}
		By using Lemma \ref{lemma1}, we obtain 
		\begin{align}\label{1c}
			\xi(t)\leq \vartheta_{0}(t)+K_{0}\int_{0}^{t}\frac{L(s)\vartheta(s)}{(t-s)^{1-\nu}}ds,\quad t\in [0,h],
		\end{align}
		where $\vartheta_{0}(t)=\vartheta(t)$.
		
		$\bullet$ \textbf{Step 2.} Now, employing the same principle, we take  $h< t\leq 2h$,
		\begin{align*}
			\xi(t)\leq \vartheta(t)+\int_{0}^{t}\frac{L(s)\xi(s)}{(t-s)^{1-\nu}}ds+\int_{h}^{t}\frac{L(s)\xi(s-h)}{(t-s)^{1-\nu}}ds
		\end{align*}
		Substituting (\ref{1c}) into the above 
		
		\begin{align*}
			\xi(t)\leq& \vartheta(t)+\int_{0}^{t}\frac{L(s)\xi(s)}{(t-s)^{1-\nu}}ds+\int_{h}^{t}\frac{L(s)}{(t-s)^{1-\nu}}\bigg[\vartheta(s-h)+K_{0}\int_{0}^{s-h}\frac{L(\tau)\vartheta(\tau)}{(s-h-\tau)^{1-\nu}}d\tau \bigg]ds.
		\end{align*}
		By applying the Fubini's theorem, 
		\begin{align*}
			&\int_{h}^{t}\frac{L(s)}{(t-s)^{1-\nu}}\int_{0}^{s-h}\frac{L(\tau)\vartheta(\tau)}{(s-h-\tau)^{1-\nu}}d\tau ds=\int_{0}^{t-h}L(\tau)\bigg[ \int_{\tau+h}^{t}\frac{L(s)ds}{(t-s)^{1-\nu}(s-h-\tau)^{1-\nu}}\bigg]\vartheta(\tau)d\tau
		\end{align*}
		Utilizing the Holder inequality, and letting $r=\frac{s-h-\tau}{t-h-\tau}$ implies  
		\begin{align*}
			\int_{\tau+h}^{t}\frac{L(s)ds}{(t-s)^{1-\nu}(s-h-\tau)^{1-\nu}}\leq& \bigg(\int_{\tau+h}^{t}L(s)^{q}ds\bigg)^{\frac{1}{q}}\bigg(\int_{\tau+h}^{t}\frac{ds}{(t-s)^{(1-\nu)\frac{q}{q-1}}(s-h-\tau)^{(1-\nu)\frac{q}{q-1}}}\bigg)^{\frac{q-1}{q}}\\
			\leq&\Vert L(\cdot)\Vert_{q} \frac{1}{(t-h-\tau)^{2(1-\nu)-\frac{q-1}{q}}}\bigg(\int_{0}^{1}\frac{dr}{(1-r)^{(1-\nu)\frac{q}{q-1}} r^{(1-\nu)\frac{q}{q-1}}}\bigg)^{\frac{q-1}{q}}.
		\end{align*}
		Since $q> \frac{1}{\nu}$  that is equivalent to
		\begin{align}
			0<1-\frac{(1-\nu)q}{q-1}=\frac{\nu q-1}{q-1},
		\end{align}
		we get
		\begin{align*}
			&\int_{\tau+h}^{t}\frac{L(s)ds}{(t-s)^{1-\nu}(s-h-\tau)^{1-\nu}}\leq \frac{\Vert L(\cdot)\Vert_{q}}{(t-h-\tau)^{2(1-\nu)-\frac{q-1}{q}}} B\bigg(\frac{\nu q-1}{q-1}, \frac{\nu q-1}{q-1}\bigg)^{\frac{q-1}{q}} \equiv \frac{K_{1}}{(t-h-\tau)^{1-\nu_{1}}},
		\end{align*}
		where, $B(\cdot,\cdot)$ is the well-known Beta function.
		
		Therefore,
		\begin{align}\label{sey}
			\xi(t)\leq&\vartheta(t)+\int_{0}^{t}\frac{L(s)\xi(s)}{(t-s)^{1-\nu}}ds+\int_{h}^{t}\frac{L(s)\vartheta(s-h)}{(t-s)^{1-\nu}}ds+K_{1}\int_{0}^{t-h}\frac{L(s)\vartheta(s)}{(t-h-s)^{1-\nu_{1}}}ds,
		\end{align}
		such as
		\begin{align*}
			&K_{1}=\Vert L(\cdot)\Vert_{q}  B\bigg(\frac{\nu q-1}{q-1}, \frac{\nu q-1}{q-1}\bigg)^{\frac{q-1}{q}},\\
			&\nu_{1}=1-\Big(2(1-\nu)-\frac{q-1}{q}\Big)=\nu+(\nu-\frac{1}{q})>\nu.
		\end{align*}
		By using the following inequality in \eqref{sey}
		\begin{align}\label{2c}
			\frac{1}{(t-s)^{1-\nu_{1}}}\leq \frac{C}{(t-s)^{1-\nu}},  \quad 0\leq s<t\leq T,
		\end{align}
		we receive
		\begin{align*}
			\xi(t)\leq&\vartheta(t)+\int_{0}^{t}\frac{L(s)\xi(s)}{(t-s)^{1-\nu}}ds+\int_{h}^{t}\frac{L(s)\vartheta(s-h)}{(t-s)^{1-\nu}}ds+K_{1}\int_{0}^{t-h}\frac{L(s)\vartheta(s)}{(t-h-s)^{1-\nu}}ds.
		\end{align*}
		By using Lemma \ref{lemma1}, we achieve
		
		\begin{align}\label{3c}
			\xi(t)\leq&\vartheta_{1}(t)+K_{1}\int_{0}^{t}\frac{L(s)\vartheta(s)}{(t-s)^{1-\nu}}ds,
		\end{align}
		where 
		\begin{align*}
			\vartheta_{1}(t)=\vartheta(t)+K_{1}\int_{h}^{t}\frac{L(s)\vartheta(s-h)}{(t-s)^{1-\nu}}ds+K_{2}\int_{0}^{t-h}\frac{L(s)\vartheta(s)}{(t-h-s)^{1-\nu}}ds.
		\end{align*}
		$\bullet$ \textbf{Step 3.} Subsequently, we examine the range where $2h\leq t\leq 3h$,
		\begin{align*}
			\xi(t)\leq \vartheta(t)+\int_{0}^{t}\frac{L(s)\xi(s)}{(t-s)^{1-\nu}}ds+\int_{h}^{t}\frac{L(s)\xi(s-h)}{(t-s)^{1-\nu}}ds
		\end{align*}
		Again substituting  \eqref{3c} into the last expression
		\begin{align*}
			\xi(t)\leq& \vartheta(t)+\int_{0}^{t}\frac{L(s)\xi(s)}{(t-s)^{1-\nu}}ds+\int_{h}^{t}\frac{L(s)}{(t-s)^{1-\nu}}\bigg[\vartheta_{1}(s-h)+K_{1}\int_{0}^{s-h}\frac{L(\tau)\vartheta(s)}{(s-h-\tau)^{1-\nu}}d\tau\bigg]ds\\
			&\text{By using the Fubini's theorem and inequality \eqref{2c}}\\
			=&\vartheta(t)+K_{3}\int_{0}^{t}\frac{L(s)\xi(s)}{(t-s)^{1-\nu}}ds+K_{3}\int_{h}^{t}\frac{L(s)\vartheta(s-h)}{(t-s)^{1-\nu}}ds
			+K_{3}\int_{0}^{t-h}\frac{L(s)\vartheta(s)}{(t-h-s)^{1-\nu}}ds\\
			+&K_{3}\int_{h}^{t-h}\frac{L(s)\vartheta(s-h)}{(t-h-s)^{1-\nu}}ds	+K_{3}\int_{0}^{t-2h}\frac{L(s)\vartheta(s)}{(t-2h-s)^{1-\nu}}ds\\
		\end{align*}
		Applying Lemma \ref{lemma1}, we get
		\begin{align}\label{4c}
			\xi(t)\leq&\vartheta_{3}(t)+K_{4}\int_{0}^{t}\frac{L(s)\vartheta(s)}{(t-s)^{1-\nu}}ds,
		\end{align}
		where 
		\begin{align*}
			\vartheta_{3}(t)=&\vartheta(t)+K_{4}\int_{0}^{t-h}\frac{L(s)\vartheta(s)}{(t-h-s)^{1-\nu}}ds+K_{4}\int_{0}^{t-2h}\frac{L(s)\vartheta(s)}{(t-2h-s)^{1-\nu}}ds\\
			+&K_{4}\int_{h}^{t}\frac{L(s)\vartheta(s-h)}{(t-s)^{1-\nu}}ds+K_{4}\int_{h}^{t-h}\frac{L(s)\vartheta(s-h)}{(t-h-s)^{1-\nu}}ds	.
		\end{align*}
		As $T$ is finite, there exists $n\in N$ such that $T<(n+1)h$. Continuing the given recursive relationship in the same manner for $ nh\leq t\leq (n+1)h$, we obtain the following inequality:
		\begin{align*}
			\xi(t)\leq&\vartheta_{n}(t)+K\int_{0}^{t}\frac{L(s)\vartheta(s)}{(t-s)^{1-\nu}}ds,\quad a.e. \quad t\in [0,T],
		\end{align*}
		where, $K$ is the maximum value of $K_{i}$ for $i\in N$, and
		\begin{align*}
			\vartheta_{n}(t)=\vartheta(t)+K\sum_{k=1}^{n}\int_{0}^{t-kh}\frac{L(s)\vartheta(s)}{(t-kh-s)^{1-\nu}}ds+K\sum_{k=0}^{n-1}\int_{h}^{t-kh}\frac{L(s)\vartheta(s-h)}{(t-kh-s)^{1-\nu}}ds.
		\end{align*}
	\end{proof}
	\section{Well-posedness in $L^{p}$ space and continuity of the state trajectory.}
	In this section, we consider  the continuity of the state trajectory for the delayed Volterra integral equation with initial condition, and we will show the application of the delayed Gronwall inequality.
	\begin{equation}\label{eq3}
		\begin{cases}
			\xi(t)= \zeta(t)+\int_{0}^{t} \frac{\kappa(t,s,\xi(s),\xi(s-h),\upsilon(s))}{(t-s)^{1-\nu}}ds, \quad a.e.\quad  t\in [0,T]\\
			\xi(t)=0,\quad -h\leq t\leq 0,\quad h\geq0.
		\end{cases}
	\end{equation}
	We present a set of the following conditions that apply to the generator function $\kappa(\cdot,\cdot,\cdot,\cdot,\cdot)$ used in our state equation. Let $U$ be a separable metric space with the metric $d$ ,which could be a nonempty bounded or unbounded set in $R^{n}$ with the metric induced by the usual Euclidean norm. With the Borel $\sigma$-field, $U$ is regarded as a measurable space. Let $u_{0} \in U$ be fixed. For any $p \geq 1$, we define
	\begin{align*}
		\mathcal{U}_{p}[0, T] = \bigl\{u : [0, T] \rightarrow U \bigm| u(\cdot ) \quad is \quad measurable,\quad  d (u(\cdot ), u_{0}) \in L^{p}(0, T)\}.
	\end{align*}
	(H): Let $\kappa: [0,T]\times [0,T]\times R^{n}\times R^{n}\times U\rightarrow R^{n}$  be  a transformation with $(t,s)\mapsto \kappa(t,s,\xi,\xi_{h},\upsilon)$ being measurable, $(\xi,\xi_{h})\mapsto \kappa(t,s,\xi,\xi_{h},\upsilon)$  being continuously differentiable,  $(\xi,\xi_{h},\upsilon)\mapsto \kappa(t,s,\xi,\xi_{h},\upsilon)$ , $(\xi,\xi_{h},\upsilon)\mapsto \kappa_{\xi}(t,s,\xi,\xi_{h},\upsilon)$ and  $(\xi,\xi_{h},\upsilon)\mapsto \kappa_{\xi_{h}}(t,s,\xi,\xi_{h},\upsilon)$ being continuous. There exist non-negative functions. $L_{0}(\cdot), L(\cdot)$ with
	\begin{align}\label{eq4}
		&L_{0}(\cdot)\in L^{\frac{1}{\nu}{+}}(0,T),\quad L(\cdot)\in L^{\frac{p}{p\nu-1}{+}}(0,T)
	\end{align}
	for some $p>\frac{1}{\nu}$ and $\nu \in (0,1), \upsilon_{0}\in U$.
	\begin{align}\label{eq5}
		\vert \kappa(t,s,0,0,\upsilon_{0})\vert \leq L_{0}(s), \quad t\in [0,T],
	\end{align}
	\begin{align}\label{eq5}
		\vert \kappa(t,s,\xi,\xi_{h},\upsilon)-\kappa(t,s,\xi^{\prime},\xi^{\prime}_{h},\upsilon^{\prime})\vert
		\leq L(s)[\vert \xi-\xi^{\prime}\vert+\vert \xi_{h}-\xi^{\prime}_{h}\vert+d(\upsilon,\upsilon^{\prime})],\\
		for \quad (t,s)\in \Delta, \quad \xi,\xi^{\prime}, \xi_{h},\xi^{\prime}_{h}\in R^{n}, \quad \upsilon,\upsilon^{\prime}\in U\nonumber.
	\end{align}
	We point out (4.3)-(4.4) declare
	\begin{align}\label{eq6}
		\vert  \kappa(t,s,\xi,\xi_{h},\upsilon)\vert \leq L_{0}(s)+L(s)[\vert \xi\vert +\vert \xi_{h}\vert +d(\upsilon,\upsilon_{0})],\quad (t,\xi,\xi_{h},\upsilon)\in [0,T]\times R^{n}\times R^{n}
		\times U.
	\end{align}
	\begin{align}\label{eq7}
		\vert \kappa(t,s,\xi,\xi_{h},\upsilon)-\kappa(t^{\prime},s,\xi,\xi_{h},\upsilon)\vert\leq K\omega (\vert t-t^{\prime}\vert)(1+\vert \xi\vert +\vert \xi_{h}\vert), \quad t,t^{\prime}\in[0,T],\quad \xi,\xi_{h}\in R^{n}, \quad \upsilon\in U,
	\end{align}
	for some modulus of continuity $\omega(\cdot)$.
	Moreover, it is evident that $L$ is included in a smaller space, compared to the space to which $L_{0}$ belongs.
	
	We will now demonstrate the  well-posedness  of the state equation (4.1) when considering  $L^{p}$ spaces.
	
	\begin{theorem}\label{t4}
		Assume that (H) satisfies with some $p\geq 1$ and $\nu\in(0,1)$. Hence for each $\zeta(\cdot)\in L^{p}(-h,T;R^{n})$ and $\upsilon\in \mathscr{U}^{p}[0,T]$, (4.1) admits a unique solution $\xi(\cdot)\equiv \xi(\cdot, \zeta(\cdot), \upsilon(\cdot))\in L^{p}(-h,T;R^{n})$,
		and  the following estimate satisfies:
		\begin{align}\label{eq8}
			\Vert \xi(\cdot)\Vert_{p}\leq \Vert \zeta(\cdot)\Vert_{p}+K\Big(1+\Vert d(\upsilon(\cdot), \upsilon_{0})\Vert_{L^{p}(0,T)}\Big).
		\end{align}
		If $(\zeta_{1}(\cdot),\upsilon_{1}(\cdot)), (\zeta_{2}(\cdot),\upsilon_{2}(\cdot))\in L^{p}(-h,T;R^{n})\times \mathscr{U}^{p}[0,T]$ and $\xi_{1}(\cdot), \xi_{2}(\cdot)$ are the solutions of (4.1) corresponding to $(\zeta_{1}(\cdot),,\upsilon_{1}(\cdot))$, and  $(\zeta_{2}(\cdot),\upsilon_{2}(\cdot))$, accordingly, then
		\begin{align}\label{eq9}
			&\Vert  \xi_{1}(\cdot)-\xi_{2}(\cdot)\Vert_{p}
			\leq K\Big\lbrace \Vert \zeta_{1}(t)-\zeta_{2}(t)\Vert_{p} \nonumber\\
			+&\bigg[\int_{0}^{T}\bigg(\int_{0}^{t}\frac{\vert \kappa(t,s,\xi_{1}(s),\xi_{1}(s-h),\upsilon_{1}(s))-\kappa(t,s,\xi_{2}(s),\xi_{2}(s-h),\upsilon_{2}(s))\vert}{(t-s)^{1-\nu}}ds\bigg)^{p}dt\bigg]^{1/p}\Big\rbrace.
		\end{align}
	\end{theorem}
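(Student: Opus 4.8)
The plan is to obtain the solution of \eqref{eq3} as a fixed point of the operator $\Gamma$ on $L^{p}(-h,T;R^{n})$ defined by $(\Gamma\xi)(t)=\zeta(t)+\int_{0}^{t}\frac{\kappa(t,s,\xi(s),\xi(s-h),\upsilon(s))}{(t-s)^{1-\nu}}\,ds$ for $t\in[0,T]$ and $(\Gamma\xi)(t)=0$ for $t\in[-h,0]$, and then to extract the two estimates \eqref{eq8}--\eqref{eq9} from the delayed Gronwall inequality of Theorem~\ref{t12}. First I would check that $\Gamma$ maps $L^{p}(-h,T;R^{n})$ into itself: by the growth bound \eqref{eq6} the integral term is dominated by the weakly singular integral of $L_{0}(s)+L(s)\big(|\xi(s)|+|\xi(s-h)|+d(\upsilon(s),\upsilon_{0})\big)$, and since $\xi\in L^{p}$, $\xi(\cdot-h)\in L^{p}$ and $d(\upsilon(\cdot),\upsilon_{0})\in L^{p}(0,T)$, H\"older's inequality places $L(\cdot)\big(|\xi(\cdot)|+|\xi(\cdot-h)|+d(\upsilon(\cdot),\upsilon_{0})\big)$ in $L^{1/\nu}(0,T)$ (this is where $L\in L^{p/(p\nu-1)+}$ is used) while $L_{0}\in L^{1/\nu+}(0,T)$; Corollary~\ref{c1} with $\beta=\nu$ and exponents chosen so that the output lands in $L^{p}$ --- namely $1/p+1=1/q+1/r$ with $1/\nu<q\le p$ and $1\le r<\frac{1}{1-\nu}$, a choice available precisely because $p>1/\nu$ --- then gives $\Gamma\xi\in L^{p}$ with a controlled norm.

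For existence I would apply the same Corollary to the Lipschitz bound \eqref{eq5}: on a short interval $[0,\delta]$ one gets $\|\Gamma\xi-\Gamma\tilde\xi\|_{L^{p}(0,\delta)}\le\rho(\delta)\,\|\xi-\tilde\xi\|_{L^{p}(-h,\delta)}$ with $\rho(\delta)\to0$ as $\delta\to0$ and $\rho$ depending only on $\nu,p$ and $\|L\|$; since both iterates vanish on $[-h,0]$ this is a genuine contraction, so \eqref{eq3} is uniquely solvable on $[0,\delta]$, and one extends to $[\delta,2\delta],[2\delta,3\delta],\dots$ in finitely many identical steps, at each stage absorbing the history integral --- non-singular, hence in $L^{p}$, on the new interval --- into the forcing term. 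Uniqueness on all of $[0,T]$ is in fact immediate from Theorem~\ref{t12}: if $\xi,\tilde\xi$ both solve \eqref{eq3}, then $\Phi(t)=|\xi(t)-\tilde\xi(t)|$ satisfies the hypothesis \eqref{eq1} with $\vartheta\equiv0$, whence $\Phi\equiv0$.

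For the a priori bound \eqref{eq8}, the growth estimate \eqref{eq6} gives $|\xi(t)|\le\vartheta(t)+\int_{0}^{t}\frac{L(s)|\xi(s)|}{(t-s)^{1-\nu}}\,ds+\int_{0}^{t}\frac{L(s)|\xi(s-h)|}{(t-s)^{1-\nu}}\,ds$ with $\vartheta(t)=|\zeta(t)|+\int_{0}^{t}\frac{L_{0}(s)+L(s)d(\upsilon(s),\upsilon_{0})}{(t-s)^{1-\nu}}\,ds$, which is exactly \eqref{eq1}; Theorem~\ref{t12} then yields $|\xi(t)|\le\vartheta_{n}(t)+K\int_{0}^{t}\frac{L(s)\vartheta(s)}{(t-s)^{1-\nu}}\,ds$ with $n$ fixed by $nh\le T<(n+1)h$. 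Taking $L^{p}(0,T)$ norms, bounding each of the finitely many weakly singular integrals in $\vartheta_{n}$ and in the remaining term via Corollary~\ref{c1} (each is $\le C\|L\|_{p/(p\nu-1)}\|\vartheta\|_{p}$ after H\"older), and using $\|\vartheta\|_{p}\le\|\zeta\|_{p}+C\|L_{0}\|_{1/\nu}+C\|L\|_{p/(p\nu-1)}\|d(\upsilon(\cdot),\upsilon_{0})\|_{L^{p}(0,T)}$ together with $\xi\equiv0$ on $[-h,0]$, I collect all constants into a single $K$ to obtain \eqref{eq8}. Finally \eqref{eq9} is essentially free: since each $\xi_{i}$ solves \eqref{eq3} with equality, $|\xi_{1}(t)-\xi_{2}(t)|\le|\zeta_{1}(t)-\zeta_{2}(t)|+\big|\int_{0}^{t}\frac{\kappa(t,s,\xi_{1}(s),\xi_{1}(s-h),\upsilon_{1}(s))-\kappa(t,s,\xi_{2}(s),\xi_{2}(s-h),\upsilon_{2}(s))}{(t-s)^{1-\nu}}\,ds\big|$ on $[0,T]$ and $\xi_{1}-\xi_{2}\equiv0$ on $[-h,0]$, so the triangle inequality in $L^{p}$ gives \eqref{eq9} directly. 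I expect the only delicate point to be the Lebesgue-exponent bookkeeping --- selecting admissible $q,r$ at each invocation of Corollary~\ref{c1} and checking the inclusions forced by $L_{0}\in L^{1/\nu+}$, $L\in L^{p/(p\nu-1)+}$ and $p>1/\nu$ --- together with the observation, needed in the continuation step, that the delayed argument $\xi(s-h)$ never reaches beyond already-constructed history; the delay itself causes no further difficulty since Theorem~\ref{t12} has already absorbed it.
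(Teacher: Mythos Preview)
Your proposal follows essentially the same strategy as the paper: define the integral operator, use Corollary~\ref{c1} to show it maps $L^{p}$ into itself and is a contraction on short subintervals, extend stepwise to $[0,T]$, and extract the estimates from the delayed Gronwall inequality (Theorem~\ref{t12}).

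Two differences are worth flagging. First, the paper does not fix a single pair of auxiliary exponents in Corollary~\ref{c1}; it splits into three cases --- $p>\frac{1}{1-\nu}$, $1<p\le\frac{1}{1-\nu}$, and $p=1$ --- and in each case chooses $\varepsilon$ (hence $q,r$) differently so that $L_{0}\in L^{q}$ and $L\in L^{pq/(p-q)}$ (or $L\in L^{\infty}$ when $p=1$). You work throughout under $p>1/\nu$, which is what the literal statement of (H) forces (since $p/(p\nu-1)$ is otherwise not a Lebesgue exponent), so your reading is defensible; but be aware that the paper intends to cover the full range $p\ge1$ asserted in the theorem, and the bulk of its proof is precisely this exponent case analysis, which your ``delicate point'' remark anticipates but does not carry out. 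Second, for~\eqref{eq9} the paper inserts and subtracts an intermediate value of $\kappa$, applies the Lipschitz bound~\eqref{eq5} to peel off a term of the form $\int_{0}^{t}\frac{L(s)[\,|\xi_{1}-\xi_{2}|+|\xi_{1}(\cdot-h)-\xi_{2}(\cdot-h)|\,]}{(t-s)^{1-\nu}}ds$, and then invokes Theorem~\ref{t12} before taking the $L^{p}$ norm. Your direct triangle-inequality argument is correct and in fact gives~\eqref{eq9} with $K=1$, because the right-hand side of~\eqref{eq9} as stated already contains $\xi_{1},\xi_{2}$; the Gronwall step is superfluous for that particular inequality, and your shortcut is legitimate.
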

	\begin{proof}
		Assuming a fixed function $\zeta(\cdot)$ in the space $L^{p}(-h,S; R^{n})$, and a control function $\upsilon$ belonging to the set $\mathscr{U}^{p}[0,T]$. For each $ z(\cdot)\in L^{p}(-h,S;R^{n})$ with $0<S\leq T$, such that $z(t)=0,$ $ -h\leq t\leq0.$ Define
		\begin{align*}
			\mathscr{T}[z(\cdot)](t)=\zeta(t)+\int_{0}^{t}\frac{\kappa(t,s,z(s),z(s-h),\upsilon(s))}{(t-s)^{1-\nu}}ds,\quad a.e.\quad t\in [0,S].
		\end{align*}
		Hence by Corollary \ref{c1}, for each $q\geq 1, 0\leq \varepsilon<\frac{\nu}{1-\nu}$, with $\frac{1}{p}+1=\frac{1}{q}+\frac{1}{1+\varepsilon}$,
		\begin{align}\label{eq10}
			&\Vert \mathscr{T}[z(\cdot)]\Vert_{L^{p}(-h,S;R^{n})}\leq \Vert \zeta(\cdot)\Vert_{L^{p}(-h,S;R^{n})}+\bigg[\int_{0}^{S}\bigg\vert\int_{0}^{t}\frac{\kappa(t,s,z(s),z(s-h),\upsilon(s))}{(t-s)^{1-\nu}}ds\bigg\vert^{p}dt\bigg]^{1/p}\nonumber\\
			\leq&\Vert \zeta(\cdot)\Vert_{p}+\bigg[\int_{0}^{S}\Big(\int_{0}^{t}\frac{\vert \kappa(t,s,z(s),z(s-h),\upsilon(s))\vert}{(t-s)^{1-\nu}}ds\Big)^{p}dt\bigg]^{1/p}\nonumber\\
			\leq&\Vert \zeta(\cdot)\Vert_{p}+\bigg[\int_{0}^{S}\Big(\int_{0}^{t}\frac{L_{0}(s)+L(s)[\vert z(s)\vert+\vert z(s-h)\vert+d(\upsilon(s),\upsilon_{0})]}{(t-s)^{1-\nu}}ds\Big)^{p}dt\bigg]^{1/p}\nonumber\\
			\leq&\Vert \zeta(\cdot)\Vert_{p}+\bigg(\frac{S^{1-(1+\varepsilon)(1-\nu)}}{1-(1+\varepsilon)(1-\nu)}\bigg)^{\frac{1}{1+\varepsilon}}\Vert L_{0}(\cdot)+L(\cdot)[\vert z(\cdot)\vert+\vert z(\cdot-h)\vert+d(\upsilon(\cdot),\upsilon_{0})]\Vert_{L^{q}(0,S)}.
		\end{align}
		We will examine three cases.
		
		\textbf{Case 1.} $p>\frac{1}{1-\nu}$. In this part, $\frac{1}{\nu}>\frac{p}{p-1}$ and $\frac{p}{1+\nu p}>1$. For every $\varepsilon\in (0,\frac{\nu}{1-\nu})$, that is equivalent to $(1-\nu)(1+\varepsilon)<1$, we obtain (without any constraint on $p>\frac{1}{1-\nu}$ for the current case)
		\begin{align*}
			\frac{1}{q}=\frac{1}{p}+1-\frac{1}{1+\varepsilon}<\frac{1}{p}+1-\frac{1}{1+\frac{\nu}{1-\nu}}=\frac{1}{p}+\nu<1, \quad \frac{1}{q}-\frac{1}{p}=1-\frac{1}{1+\varepsilon}<\nu.
		\end{align*}
		Therefore,
		\begin{align*}
			q\searrow\frac{p}{1+\nu p}>1, \quad \frac{p q}{p-q}\searrow\frac{1}{\nu},\quad as \quad \varepsilon\nearrow\frac{\nu}{1-\nu}.
		\end{align*}
		As $L_{0}(\cdot)$ belongs to $L^{\frac{p}{1+\nu p}+}(0,T)$ and $L(\cdot)$ belongs to $L^{\frac{1}{\nu}+}(0,T)$, we was able to find $\varepsilon$ close enough to $\frac{\nu}{1-\nu}$ such that $L_{0}(\cdot)$ belongs to $L^{q}(0,T)$ and $L(\cdot)$ belongs to $L^{\frac{p q}{p-q}}(0,T)$.Hence
		\begin{align}\label{eq11}
			&\Vert L_{0}(\cdot)+L(\cdot)[\vert z(\cdot)\vert+\vert z(\cdot-h)\vert
			+d(\upsilon(\cdot),\upsilon_{0})]\Vert_{L^{q}(0,S)}\leq \Vert L_{0}(\cdot)\Vert_{L^{q}(0,T)}\nonumber\\
			+&\Vert L(\cdot)\Vert_{L^{\frac{p q}{p-q}}(0,T)}\Vert \vert z(\cdot)\vert+\vert z(\cdot-h)\vert+d(\upsilon(\cdot),\upsilon_{0})\Vert_{L^{p}(0,S)}.
		\end{align}
		which yields
		\begin{align}\label{eq12}
			\Vert \mathscr{T}[z(\cdot)]\Vert_{L^{p}(-h,S;R^{n})}\leq &\Vert \zeta(\cdot)\Vert_{p}+\bigg(\frac{S^{1-(1+\varepsilon)(1-\nu)}}{1-(1+\varepsilon)(1-\nu)}\bigg)^{\frac{1}{1+\varepsilon}}\Big\lbrace \Vert L_{0}(\cdot)\Vert_{L^{q}(0,T)}\nonumber\\
			+&\Vert L(\cdot)\Vert_{L^{\frac{p q}{p-q}}(0,T)}\Vert \vert z(\cdot)\vert+\vert z(\cdot-h)\vert+d(\upsilon(\cdot),\upsilon_{0})\Vert_{L^{p}(0,S)}\Big\rbrace\\
			\leq &\Vert \zeta(\cdot)\Vert_{p}+\bigg(\frac{S^{1-(1+\varepsilon)(1-\nu)}}{1-(1+\varepsilon)(1-\nu)}\bigg)^{\frac{1}{1+\varepsilon}}\Big\lbrace \Vert L_{0}(\cdot)\Vert_{L^{q}(0,T)}\nonumber\\
			+&\Vert L(\cdot)\Vert_{L^{\frac{p q}{p-q}}(0,T)}(2\Vert z(\cdot)\Vert_{p}+\Vert d(\upsilon(\cdot),\upsilon_{0})\Vert_{L^{p}(0,T)})\Big\rbrace\nonumber
		\end{align}
		Hence, we can conclude that the operator $\mathscr{T}$ maps functions in $L^{p}(-h,S;R^{n})$ to functions in $L^{p}(-h,S;R^{n})$ for all $S$ belonging to the interval $(0,T]$.
		Moving forward, suppose  $\delta\in (0,T]$ be undetermined, and consider two functions $z_{1}(\cdot)$ and $z_{2}(\cdot)$ belonging to $L^{p}(-h,\delta;R^{n})$. We will investigate the following expression (using Corollary \ref{c1}):
		\begin{align*}
			&\Big\Vert \mathscr{T}[z_{1}(\cdot)](t)-\mathscr{T}[z_{2}(\cdot)](t)\Big\Vert_{L^{p}(-h,S;R^{n})}\equiv \Bigg[ \int_{0}^{\delta}\bigg\vert \mathscr{T}[z_{1}(\cdot)](t)-\mathscr{T}[z_{2}(\cdot)](t)\bigg\vert^{p}dt\Bigg]^{1/p}\\
			=&\Bigg[ \int_{0}^{\delta}\bigg\vert \int_{0}^{t}\frac{\kappa(t,s,z_{1}(s),z_{1}(s-h),\upsilon(s))-\kappa(t,s,z_{2}(s),z_{2}(s-h),\upsilon(s))}{(t-s)^{1-\nu}}ds\bigg\vert^{p}dt\Bigg]^{1/p}\\
			\leq&\Bigg[ \int_{0}^{\delta}\bigg( \int_{0}^{t}\frac{\vert \kappa(t,s,z_{1}(s),z_{1}(s-h),\upsilon(s))-\kappa(t,s,z_{2}(s),z_{2}(s-h),\upsilon(s))\vert}{(t-s)^{1-\nu}}ds\bigg)^{p}dt\Bigg]^{1/p}\\
			\leq& \bigg(\frac{\delta^{1-(1+\varepsilon)(1-\nu)}}{1-(1+\varepsilon)(1-\nu)}\bigg)^{\frac{1}{1+\varepsilon}}\Vert L(\cdot)\Vert_{L^{\frac{p q}{p-q}}(0,T)}\Big( \Vert z_{1}(\cdot)-z_{2}(\cdot)\Vert_{L^{p}(-h,\delta;R^{n})}\\
			+&\Vert z_{1}(\cdot-h)-z_{2}(\cdot-h)\Vert_{L^{p}(-h,\delta;R^{n})}\Big)\\
			\leq& 2\bigg(\frac{\delta^{1-(1+\varepsilon)(1-\nu)}}{1-(1+\varepsilon)(1-\nu)}\bigg)^{\frac{1}{1+\varepsilon}}\Vert L(\cdot)\Vert_{L^{\frac{p q}{p-q}}(0,T)} \Vert z_{1}(\cdot)-z_{2}(\cdot)\Vert_{L^{p}(-h,\delta;R^{n})}.
		\end{align*}
		Consider choosing a value of $\delta$ from the interval $(0,T]$ such that $2\bigg(\frac{\delta^{1-(1+\varepsilon)(1-\nu)}}{1-(1+\varepsilon)(1-\nu)}\bigg)^{\frac{1}{1+\varepsilon}}\Vert L(\cdot)\Vert_{L^{\frac{p q}{p-q}}(0,T)}<1$. This choice of $\delta$ ensures that the operator $\mathscr{T}: L^{p}(-h,\delta;R^{n})\to L^{p}(-h,\delta;R^{n})$ is a contraction, and therefore has a unique fixed point $\xi(\cdot)$ in $L^{p}(-h,\delta;R^{n})$. This fixed point is the unique solution of the state equation (4.1) on the interval $[0,\delta]$, with $\xi(t)=0$, $-h\leq t\leq 0$.
		
		Next, we focus on the state equation (4.1) over the interval $[0,2\delta]$. Let $z(\cdot)$ be any function belonging to $L^{p}(\delta,2\delta;R^{n})$, and define the following expression:
		\begin{align*}
			\mathscr{T}[z(\cdot)](t)=\zeta(t)+\int_{0}^{\delta}\frac{\kappa(t,s,\xi(s),\xi(s-h),\upsilon(s))}{(t-s)^{1-\nu}}ds+\int_{\delta}^{t}\frac{\kappa(t,s,z(s),z(s-h),\upsilon(s))}{(t-s)^{1-\nu}}ds, \quad a.e\quad t\in [\delta,2\delta].
		\end{align*}
		Denote $\bar{z}(\cdot)=\xi(\cdot)\chi_{[0,\delta)}(\cdot)+z(\cdot)\chi_{[\delta,2\delta]}(\cdot)\in L^{p}(-h,2\delta;R^{n}), \quad \bar{z}(\cdot-h)=\xi(\cdot-h)\chi_{[0,\delta)}(\cdot)+z(\cdot-h)\chi_{[\delta,2\delta]}(\cdot)\in L^{p}(-h,2\delta;R^{n})$. Hence similarly to (4.9) - (4.11)(with $S=2\delta$)
		\begin{align*}
			&	\bigg\Vert \zeta(\cdot)+\int_{0}^{\delta}\frac{\kappa(t,s,\xi(s),\xi(s-h),\upsilon(s))}{(\cdot-s)^{1-\nu}}ds\bigg\Vert_{L^{p}(\delta,2\delta;R^{n})}\\
			=&\Bigg[ \int_{\delta}^{2\delta}\bigg\vert \zeta(t)+\int_{0}^{\delta}\frac{\kappa(t,s,\xi(s),\xi(s-h),\upsilon(s))}{(t-s)^{1-\nu}}ds \bigg\vert^{p}dt\Bigg]^{1/p}\\
			\leq&\Big(\int_{\delta}^{2\delta}\vert \zeta(t)\vert^{p}dt\Big)^{1/p}+\Bigg[ \int_{\delta}^{2\delta}\bigg( \int_{0}^{\delta}\frac{\vert \kappa(t,s,\xi(s),\xi(s-h),\upsilon(s))\vert}{(t-s)^{1-\nu}}ds \bigg)^{p}dt\Bigg]^{1/p}\\
			\leq&\Vert \zeta(\cdot)\Vert_{p}+\Bigg[ \int_{\delta}^{2\delta}\bigg( \int_{0}^{\delta}\frac{L_{0}(s)+L(s)\big[\vert \xi(s)\vert+\vert \xi(s-h)\vert+d(\upsilon(s),\upsilon_{0})\big]}{(t-s)^{1-\nu}}ds \bigg)^{p}dt\Bigg]^{1/p}\\
			\leq&\Vert \zeta(\cdot)\Vert_{p}+\Bigg[ \int_{0}^{2\delta}\bigg( \int_{0}^{\delta}\frac{L_{0}(s)}{(t-s)^{1-\nu}}ds \bigg)^{p}dt\Bigg]^{1/p}\\
			+&\Bigg[ \int_{0}^{2\delta}\bigg( \int_{0}^{\delta}\frac{L(s)\big[\vert \bar{z}(s)\vert+\vert \bar{z}(s-h)\vert+d(\upsilon(s),\upsilon_{0})\big]}{(t-s)^{1-\nu}}ds \bigg)^{p}dt\Bigg]^{1/p}\\
			\leq&\Vert \zeta(\cdot)\Vert_{p}+K\bigg[ 1+2\Vert \xi(\cdot)\Vert_{L^{p}(-h,\delta;R^{n})}
			+2\Vert z(\cdot)\Vert_{L^{p}(\delta,2\delta;R^{n})}+\Vert d(\upsilon(\cdot),\upsilon_{0})\Vert_{p}\bigg]
		\end{align*}
		Following a similar argument to the proof of (4.11), we can conclude that the operator $\mathscr{T}$ maps functions from $L^{p}(\delta,2\delta;R^{n})$ to $L^{p}(\delta,2\delta;R^{n})$.Suppose we have two functions, $z_{1}(\cdot)$ and $z_{2}(\cdot)$, both belonging to $L^{p}(\delta,2\delta;R^{n})$. By using Corollary \ref{c1}, we obtain
		\begin{align*}
			&	\Big\Vert \mathscr{T}[z_{1}(\cdot)](t)-\mathscr{T}[z_{2}(\cdot)](t)\Big\Vert_{L^{p}(\delta,2\delta;R^{n})}\equiv\bigg[ \int_{\delta}^{2\delta} \bigg\vert \mathscr{T}[z_{1}(\cdot)](t)-\mathscr{T}[z_{2}(\cdot)](t)\bigg\vert^{p}dt\bigg]^{1/p}\\
			=&\bigg[ \int_{\delta}^{2\delta} \bigg\vert \int_{\delta}^{t} \frac{\kappa(t,s,z_{1}(s),z_{1}(s-h),\upsilon(s))-\kappa(t,s,z_{2}(s),z_{2}(s-h),\upsilon(s))}{(t-s)^{1-\nu}}ds\bigg\vert^{p}dt\bigg]^{1/p}\\
			\leq&\bigg[\int_{\delta}^{2\delta}\bigg(\int_{\delta}^{t}\frac{L(s)[\vert z_{1}(s)-z_{2}(s)\vert+\vert z_{1}(s-h)-z_{2}(s-h)\vert]}{(t-s)^{1-\nu}}ds\bigg)^{p}dt\bigg]^{1/p}\\
			\leq& \bigg(\frac{\delta^{1-(1+\varepsilon)(1-\nu)}}{1-(1+\varepsilon)(1-\nu)}\bigg)^{\frac{1}{1+\varepsilon}}\Vert L(\cdot)\Vert_{L^{\frac{p q}{p-q}}(0,T)} \Big[\Vert z_{1}(\cdot)-z_{2}(\cdot)\Vert_{L^{p}(\delta,2\delta;R^{n})}\\
			+&\Vert z_{1}(\cdot-h)-z_{2}(\cdot-h)\Vert_{L^{p}(\delta,2\delta;R^{n})}\Big]\\
			\leq& 2\bigg(\frac{\delta^{1-(1+\varepsilon)(1-\nu)}}{1-(1+\varepsilon)(1-\nu)}\bigg)^{\frac{1}{1+\varepsilon}}\Vert L(\cdot)\Vert_{L^{\frac{p q}{p-q}}(0,T)} \Vert z_{1}(\cdot)-z_{2}(\cdot)\Vert_{L^{p}(\delta,2\delta;R^{n})}
		\end{align*}
		Hence we get the existence and uniqueness of the solution to the state equation on $[\delta,2\delta]$. Using the induction, we can obtain the solution $\xi(\cdot)\in [-h,\delta],[\delta,2\delta],\cdots,[(\frac{T}{\delta})\delta,T]$.
		
		Now, suppose $\zeta_{1}(\cdot), \upsilon_{1}(\cdot)), \quad(\zeta_{2}(\cdot), \upsilon_{2}(\cdot))\in L^{p}(-h,T;R^{n})\times \mathscr{U}^{p}[0,T]$ and $\xi_{1}(\cdot),\xi_{2}(\cdot)$ become the corresponding solutions. Hence
		\begin{align*}
			\vert \xi_{1}(t)-\xi_{2}(t)\vert\leq& \vert \zeta_{1}(t)-\zeta_{2}(t)\vert+\int_{0}^{t}\frac{\vert \kappa(t,s,\xi_{1}(s),\xi_{1}(s-h),\upsilon_{1}(s))-\kappa(t,s,\xi_{2}(s),\xi_{2}(s-h),\upsilon_{2}(s))\vert}{(t-s)^{1-\nu}}ds\\
			+&\int_{0}^{t}\frac{\vert \kappa(t,s, \xi_{1}(s), \xi_{1}(s-h),\upsilon_{2}(s))-\kappa(t,s, \xi_{2}(s), \xi_{2}(s-h,\upsilon_{2}(s))\vert}{(t-s)^{1-\nu}}ds\\
			\leq&\vert \zeta_{1}(t)-\zeta_{2}(t)\vert+\int_{0}^{t}\frac{\vert \kappa(t,s,\xi_{1}(s),\xi_{1}(s-h),\upsilon_{1}(s))-\kappa(t,s,\xi_{2}(s),\xi_{2}(s-h),\upsilon_{2}(s))\vert}{(t-s)^{1-\nu}}ds\\
			+&\int_{0}^{t}\frac{L(s)\Big[\vert \xi_{1}(s)-\xi_{2}(s)\vert+\vert \xi_{1}(s-h)-\xi_{2}(s-h)\vert\Big]}{(t-s)^{1-\nu}}ds\\
			\equiv& \vartheta(t)+\int_{0}^{t}\frac{L(s)\vert \xi_{1}(s)-\xi_{2}(s)\vert}{(t-s)^{1-\nu}}ds+\int_{0}^{t}\frac{L(s)\vert \xi_{1}(s-h)-\xi_{2}(s-h)\vert}{(t-s)^{1-\nu}}ds
		\end{align*}
		Then, by using Theorem \ref{t12}, we will get
		\begin{align}
			\vert \xi_{1}(t)-\xi_{2}(t)\vert\leq&\vartheta(t)+K\sum_{k=0}^{n}\int_{0}^{t-kh}\frac{L(s)\vartheta(s)}{(t-kh-s)^{1-\nu}}ds+K\sum_{k=0}^{n-1}\int_{h}^{t-kh}\frac{L(s)\vartheta(s-h)}{(t-kh-s)^{1-\nu}}ds, \quad a.e. \quad t\in [0,T],
		\end{align}
		for some constant $K>0$. As a result,
		\begin{align*}
			\Vert  \xi_{1}(\cdot)-\xi_{2}(\cdot)\Vert_{p}\leq& K	\Bigg[\int_{0}^{T}\Big(\vartheta(t)+K\sum_{k=0}^{n}\int_{0}^{t-kh}\frac{L(s)\vartheta(s)}{(t-kh-s)^{1-\nu}}ds+K\sum_{k=0}^{n-1}\int_{h}^{t-kh}\frac{L(s)\vartheta(s-h)}{(t-kh-s)^{1-\nu}}ds\Big)^{p}dt\Bigg]^{1/p}\\
			\leq&K\Big(\int_{0}^{T}\big(\vartheta(t)\big)^{p}dt\Big)^{1/p}\leq K\Bigg\lbrace \Vert \zeta_{1}(t)-\zeta_{2}(t)\Vert_{p}\\
			+&\bigg[\int_{0}^{T}\bigg(\int_{0}^{t}\frac{\vert \kappa(s, \xi_{1}(s), \xi_{1}(s-h),\upsilon_{1}(s))-\kappa(s, \xi_{2}(s), \xi_{2}(s-h,\upsilon_{2}(s))\vert}{(t-s)^{1-\nu}}ds\bigg)^{p}dt\bigg]^{1/p}\Bigg\rbrace.
		\end{align*}
		proving the stability estimate. We are able to the similar argument to prove (4.7).
		
		\textbf{Case 2.} $1<p\leq \frac{1}{1-\nu}$. In this case,
		\begin{align*}
			\frac{1}{\nu}\leq \frac{p}{p-1}, \quad \frac{p}{1+\nu p}\leq 1.
		\end{align*}
		Also, since $1-\nu\leq \frac{1}{p}<1$ for each $\varepsilon\in (0,p-1)$, the following satisfies:
		
		\begin{align*}
			1-\nu\leq\frac{1}{p}<\frac{1}{1+\varepsilon}.
		\end{align*}
		This imply $(1-\nu)(1+\varepsilon)<1$. Hence
		\begin{align*}
			\frac{1}{p}<\frac{1}{q}=\frac{1}{p}+1-\frac{1}{1+\varepsilon}\nearrow1, \quad \frac{p-q}{p q}=\frac{1}{p}-\frac{1}{q}=1-\frac{1}{1+\varepsilon}\nearrow \frac{p-1}{p}, \quad as \quad \varepsilon\nearrow p-1.
		\end{align*}
		Therefore, we can choose a value for $\varepsilon$ close to $p-1$, given that $L_{0}(\cdot)\in L^{1+}(0,T)$ and $L(\cdot)\in L^{\frac{p}{p-1}+}(0,T)$. This will ensure that $L_{0}(\cdot)\in L^{q}(0,T)$ and $L(\cdot)\in L^{\frac{p q}{p-q}}(0,T)$. The remaining steps of the proof will be identical to Case 1.
		
		\textbf{Case 3.} $p=1$. In this part, the condition reads $L_{0}(\cdot)\in L^{1+}(0,T)$ and $L(\cdot)\in L^{\infty}(0,T)$. Hence we take $\varepsilon=0$, and (4.11) reads
		\begin{align}
			\Vert \mathscr{T}[z(\cdot)]\Vert_{L^{1}(-h,S;R^{n})}\leq \Vert \zeta(\cdot)\Vert_{1}+\frac{S^{\nu}}{\nu}\lbrace \Vert L_{0}(\cdot)\Vert_{L^{1}(0,T)}+\Vert L(\cdot)\Vert_{L^{\infty}(0,T)}(2\Vert  z(\cdot)\Vert+\Vert d(\upsilon(\cdot),\upsilon_{0})\Vert_{L^{1}(0,S)})\rbrace.
		\end{align}
		
		The remainder of the proof is comparable to that of Case 1.
		
	\end{proof}
	
	\begin{example}
		Let
		\begin{align*}
			\kappa(t,s,\xi(s),\xi(s-h),\upsilon)=\frac{\sqrt{\vert s-1\vert^{2\delta-2}\vert t+1\vert^{2-2\gamma}+\vert \xi(s)\vert+\vert \xi(s-h)\vert}}{\vert s-1\vert^{1-\nu}\vert t+1\vert^{1-\gamma}},\\
			\forall (t,s,\xi,\xi_{s-h},\upsilon)\in\Delta\times R^{2}\times U,\quad  \gamma<1, \quad s\not=1.
		\end{align*}
		Consider the following Volterra integral equation
		\begin{equation}\label{14}
			\xi(t)=\frac{1}{\vert t-1\vert^{1-\sigma}}+\int_{0}^{t} \frac{\sqrt{\vert s-1\vert^{2\delta-2}\vert t+1\vert^{2-2\gamma}+\vert \xi(s)\vert+\vert \xi(s-h)\vert}}{\vert s-1\vert^{1-\nu}\vert t+1\vert^{1-\gamma}(t-s)^{1-\beta}} ds, \quad a.e,\quad t\in[0,T]
		\end{equation}
		for some $\nu, \beta\in (0,1)$, $\sigma,\delta\in (0,1]$ , and $\gamma>1$. We can take
		\begin{align*}
			\zeta(t)=\frac{1}{\vert t-1\vert^{1-\sigma}},\quad L_{0}(s)=\frac{1}{\vert s-1\vert^{2-\nu-\delta}},\quad L(s)=\frac{1}{\vert s-1\vert^{1-\nu}}, \quad t\not=1,\quad s\not=1.
		\end{align*}
		We see that
		\begin{align*}
			\zeta(\cdot)\in L^{p}(0,1)\leftarrow p(1-\sigma)<1 \iff p<\frac{1}{1-\sigma} \quad (\frac{1}{0}=\infty),
		\end{align*}
		\begin{align*}
			\begin{cases}
				(2-\nu-\delta)\frac{p}{1+\beta p}<1 \iff 2-\nu-\beta-\delta<\frac{1}{p}\\
				\\
				\quad\quad\quad\quad\quad\quad\quad\quad\quad \iff p<\frac{1}{(2-\nu-\beta-\delta)_{+}},\\
				\\
				2-\nu-\delta<1\quad \quad\quad \iff \nu+\beta>1.
			\end{cases}
		\end{align*}
		\begin{align*}
			\implies L(\cdot)\in L^{(\frac{1}{\beta} \vee 1)+}(0,T),
		\end{align*}
		\begin{align*}
			\begin{cases}
				\frac{1-\nu}{\beta}<1 \iff \nu+\beta>1,\\
				\\
				(1-\nu)\frac{p}{p-1}<1 \iff1-\nu<1-\frac{1}{p}\iff p>\frac{1}{\nu}
			\end{cases}
		\end{align*}
		\begin{align*}
			L(\cdot)\in L^{(\frac{1}{\beta} \vee \frac{p}{p-1})+}(0,T).
		\end{align*}
		Then, (\ref{14}) has a unique solution $\xi(\cdot)\in L^{p}(0,T)$ for any
		\begin{align*}
			p \in \Big( \frac{1}{\nu}, \frac{1}{(1-\sigma)\vee (2-\nu-\beta-\delta)+}\Big),
		\end{align*}
		supplied
		\begin{align}
			\nu+\beta>1, \quad \nu+\delta>1.
		\end{align}
		
		Let us highlight that the solution $\xi(\cdot)$ of equation (4.10) does not have to be continuous in general, even if the free term $\zeta(\cdot)$ is continuous. Specifically, when we set $\sigma$ to 1 and consider $\zeta(t)$ as a constant function equal to 1, which is continuous, we observe that the solution $\xi(\cdot)$ is positive. This positivity can be determined by applying the Picard iteration method. Thus,
		\begin{align}
			\lim_{t\to 1}\xi(t)\geq 1+\lim_{t\to 1} \int_{0}^{t}\frac{ds}{\vert s-1\vert^{2-\nu-\delta}(t-s)^{1-\beta}}=\int_{0}^{1}\frac{ds}{(1-s)^{3-\nu-\beta-\delta}}=\infty,
		\end{align}
		supplied
		\begin{align}
			3-\nu-\beta-\delta>1 \iff \nu+\beta+\delta<2.
		\end{align}
		This will be the case if we take
		\begin{align*}
			\nu=\frac{2}{3}, \quad \beta=\delta=\frac{1}{2}.
		\end{align*}
		In this scenario, there is a solution $\xi(\cdot)\in L^{p}(0,T)$ where $p \in (\frac{3}{2}, 3)$. However, it should be noted that the solution is not continuous at $t=1$.
		
	\end{example}

\end{document}